\def\co{\colon\thinspace}
\newcommand{\id}{\mbox{\rm id}}
\newcommand{\e}{\mathrm{e}}
\newcommand{\C}{\mathbb{C}}
\newcommand{\R}{\mathbb{R}}
\newtheorem{thm}{Theorem}
\newtheorem*{thm*}{Theorem}
\newtheorem{prop}[thm]{Proposition}
\newtheorem{cor}[thm]{Corollary}
\newtheorem*{claim*}{Claim}
\theoremstyle{definition}
\newtheorem*{rem*}{Remark}
\newtheorem*{rems*}{Remarks}
\newtheorem*{remdefn*}{Remark/Definition}
\newtheorem*{defn*}{Definition}
\newtheorem*{ex*}{Example}
\newtheorem*{que*}{Question}
\begin{document}
\title[]{Open books and exact symplectic cobordisms}
\author{Mirko Klukas}
\address{Mathematisches Institut, University of Cologne, Weyertal 86--90,
D-50931 Cologne, Germany}
\email{mklukas@math.uni-koeln.de}
\date{\today}
\maketitle
\begin{abstract}
Given two open books with equal pages we show the existence of an exact symplectic cobordism 
whose negative end equals the disjoint union of the contact manifolds associated to the given open books, 
and whose positive end induces the contact manifold associated to the open book with the same 
page and concatenated monodromy.
Using similar methods we show the existence of strong fillings for contact manifolds 
associated with \textit{doubled open books}, a certain class of fiber bundles over the circle obtained 
by performing the binding sum of two open books with equal pages and inverse 
monodromies. From this we conclude, following an outline by Wendl, that the 
complement of the binding of an open book cannot contain any local filling obstruction.
Given a contact $3$-manifold, according to Eliashberg there is a symplectic cobordism to a 
fibration over the circle with symplectic fibers. We extend this result to higher 
dimensions recovering a recent result by D\"orner--Geiges--Zehmisch. Our cobordisms can also be 
thought of as the result of the attachment of a generalized symplectic $1$-handle.
\end{abstract}
\begin{center}
{\small\textsc{Mathematics Subject Classification 2000:} 53D05, 53D10, 57R17, 57R65.}
\end{center}
%
%
\section*{Introduction}
%
%
Let $\Sigma$ denote a compact, $2n$-dimensional manifold admitting an exact symplectic form
$\omega = d\beta$ and let $Y$ denote the Liouville vector field defined by
$\iota_Y \omega = \beta$. Suppose that $Y$ is transverse to the boundary $\partial \Sigma$,
pointing outwards. These properties are precisely the ones requested
for $\Sigma$ to be a page of an abstract open book in the contact setting.
Given a symplectomorphism $\phi$ of $(\Sigma,\omega)$, equal to the identity 
near $\partial\Sigma$, one can, following a construction of Thurston 
and Winkelnkemper \cite{MR0375366} or rather its adaption to higher 
dimensions by Giroux \cite{MR1957051}, 
associate a $(2n+1)$-dimensional contact manifold 
$M_{(\Sigma,\omega,\phi)}$ to the data $(\Sigma,\omega,\phi)$.
\parskip 0pt

The main result of the present paper is part of the author's thesis
\cite{Klukas-Thesis}.
\begin{thm}
\label{thm:stein-monoid}
Given two symplectomorphisms $\phi_0$ and $\phi_1$ of $(\Sigma,\omega)$, 
equal to the identity near the boundary $\partial\Sigma$,  there
is an exact symplectic cobordism whose negative end equals the disjoint union of the contact manifolds $M_{(\Sigma,\omega,\phi_0)}$ and $M_{(\Sigma,\omega,\phi_1)}$, and whose positive end
equals $M_{(\Sigma,\omega,\phi_0 \circ \phi_1)}$. If, in addition, the page $(\Sigma,\omega)$ is a Weinstein manifold, then so is the cobordism.
\end{thm}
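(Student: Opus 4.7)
The plan is to construct the cobordism $W$ as a $\Sigma$-bundle over a pair of pants, with the binding region filled in explicitly. First, let $P$ denote the oriented pair of pants, equipped with an exact symplectic form $d\lambda_P$ whose Liouville vector field is inward-pointing along two boundary circles $C_0, C_1$ and outward-pointing along the third, $C_+$; this makes $(P, d\lambda_P)$ itself a two-dimensional exact symplectic cobordism from $S^1 \sqcup S^1$ to $S^1$. Over $P$ I would build a $\Sigma$-bundle $\pi \colon E \to P$ carrying a symplectic connection whose monodromies around $C_0, C_1$ are $\phi_0, \phi_1$; the relation $c_0 c_1 c_+ = 1$ in $\pi_1(P)$ then forces the induced monodromy around $C_+$ to be $\phi_0 \circ \phi_1$. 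Since each $\phi_i$ equals the identity near $\partial\Sigma$, the bundle is canonically trivial along the binding submanifold, so $\partial\Sigma$ extends globally to a piece $B \cong \partial\Sigma \times P$, and I set $W := E \cup B$ glued along $\partial\Sigma \times P$. The three boundary components of $W$ are then diffeomorphic to the three desired open books.

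Next I would equip $W$ with a global exact symplectic form. On $E$, the starting point is the vertically symplectic form induced by $\omega$ and the connection, and I add a large positive multiple of $\pi^* d\lambda_P$ (Thurston's trick) to obtain a symplectic form $\Omega_E$. Exactness $\Omega_E = d\Lambda_E$ can be arranged using $\omega = d\beta$ and the fact that $\phi_i^*\beta - \beta$ vanishes near $\partial\Sigma$, so that $\beta$ is essentially monodromy-invariant near the binding and can be transported to a global $1$-form (with a correction away from the binding). On the binding piece $B$, I use the standard model form $r^2 \alpha + \lambda_P$, where $\alpha := \beta|_{\partial\Sigma}$ and $r$ is a collar coordinate, and match it to $\Lambda_E$ along $\partial\Sigma \times P$ via Moser's trick to obtain a global primitive $\Lambda$ on $W$. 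The main obstacle will be verifying that the Liouville vector field $Y$ of $\Lambda$ is transverse to each end with the correct sign and that the restriction of $\Lambda$ is contact-isotopic to the Thurston--Winkelnkemper--Giroux form of the corresponding open book; this amounts to identifying a collar of each end with the symplectisation of the relevant mapping torus.

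In the Weinstein case, I would upgrade to a Weinstein cobordism by extending a Lyapunov function $f$ on the fibre $\Sigma$ to a global Lyapunov function $F$ on $W$ via the connection, adding a contribution pulled back from a Morse function on $P$ with a single interior index-one critical point adapted to $\lambda_P$. The resulting critical structure corresponds to the attachment of exactly one \emph{generalised symplectic $1$-handle}, as referenced in the abstract. Some care is needed that the extension across $B$ does not introduce extra critical points on $\partial\Sigma \times P$, but this can be arranged by choosing $F$ to be strictly monotone in the $r$-coordinate of the binding collar.
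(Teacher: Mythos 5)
Your fibration-over-a-pair-of-pants picture does match the geometry of the paper's construction away from the binding (the paper realises it concretely as a region in $\Sigma\times\R^2$ with form $\omega+dz\wedge dt$, cut and reglued along $\{z=0\}$ by $\phi_0$ and $\phi_1^{-1}$, so no Thurston trick is needed), but your treatment of the binding region does not work as written. The total space $E$ is $(2n+2)$-dimensional and its horizontal boundary is $\partial\Sigma\times P$, which is $(2n+1)$-dimensional; a piece ``$B\cong\partial\Sigma\times P$'' glued along $\partial\Sigma\times P$ is therefore either just the interface itself or of the wrong dimension, and with $W=E\cup B$ as described the boundary of $W$ (after smoothing corners) is the single closed manifold obtained by summing the three open books along their bindings -- the three mapping tori glued to $\partial\Sigma\times P$ along their boundaries -- and not the required disjoint union $M_{(\Sigma,\omega,\phi_0)}\sqcup M_{(\Sigma,\omega,\phi_1)}\sqcup M_{(\Sigma,\omega,\phi_0\circ\phi_1)}$. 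What is needed is a codimension-zero cap of the form $\partial\Sigma\times Q$, where $Q$ is a $3$-dimensional region whose boundary consists of the pair of pants $P$ (the gluing interface) together with three discs supplying the binding pieces $\partial\Sigma\times D^2$ of the three ends, carrying a Liouville form that matches the Giroux binding model $h_1(r)\,\alpha+h_2(r)\,d\varphi$ at the ends and your $\Lambda_E$ along the interface. Your proposed model $r^2\alpha+\lambda_P$ does not provide this: it degenerates at $r=0$ and produces no $D^2$-caps. In the paper this step is built in automatically by cutting the cobordism out of $\Sigma\times\R^2$ by level sets of $\varrho^2+z^2+t^2$, where $\varrho$ blows up at $\partial\Sigma$.

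The second gap is that the step you defer as ``the main obstacle'' is in fact the core of the theorem, and your sketch offers no mechanism for it. A primitive of the glued symplectic form has to be corrected across the regluing locus using the exactness of the monodromies (writing $\Phi^*\beta-\beta=d\varphi$ with $\varphi$ vanishing near $\partial\Sigma$), and it is exactly this correction that destroys transversality of the Liouville field to naively chosen ends: in the paper it appears as the term $-g'(z)\varphi(p)\,\partial_t$, and transversality with the correct coorientations (two concave ends, one convex end, inducing the Giroux contact forms) is recovered only by choosing the constants $e,E,C$ defining the boundary hypersurfaces so large that this deviation becomes negligible. Invoking Moser's trick to ``match primitives'' near the boundary is not available without controlling precisely this behaviour, since Moser isotopies need not preserve convexity or concavity of the ends. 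Until this is carried out you have established neither the claimed signs of the ends nor the identification of the induced contact structures with the open book manifolds, and the Weinstein refinement (your Lyapunov-function argument) inherits the same unresolved issues.
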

For $n=1$ the above statement is due to Baker--Etnyre--van Horn-Morris~\cite{2010arXiv1005.1978B} and, 
independently, Baldwin~\cite{MR2923174}. The general case of Theorem~\ref{thm:stein-monoid} was 
independently obtained by Avdek in \cite{2012arXiv1204.3145}, where the cobordism is associated with 
a so-called \textit{Liouville connected sum}. In \cite{Klukas-Thesis} I observed that the cobordism 
in Theorem~\ref{thm:stein-monoid} can also be understood as result of the attachment of a 
\textit{generalized symplectic $1$-handle} of the form $D^1 \times N(\Sigma)$, where $N(\Sigma)$ denotes a 
vertically invariant neighborhood of the symplectic hypersurface $\Sigma$. 
I will shed some more light on this in \S\ref{sec:1 handle}.
\parskip 0pt

From the methods introduced in the proof of Theorem~\ref{thm:stein-monoid} we can deduce some 
further applications such as the following.
We show the existence of strong fillings for contact manifolds associated with \textit{doubled open books}, 
a certain class of fibre bundles over the circle obtained 
by performing the binding sum of two open books with equal pages and inverse 
monodromies (cf.~\S\ref{sec:sym open books}). 
\begin{thm}
\label{thm:balanced fibration}
Any contact manifold associated to a doubled open book admits an exact symplectic filling.
\end{thm}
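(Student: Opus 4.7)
The plan is to construct the exact symplectic filling directly, using the same Thurston--Winkelnkemper--Giroux-type machinery that underlies the proof of Theorem~\ref{thm:stein-monoid}, but now with an annulus in place of the pair of pants implicit in the cobordism of Theorem~\ref{thm:stein-monoid}.

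First I would set $A = S^1 \times [0,1]$ and consider the $\Sigma$-bundle $E \to A$ whose monodromy around the $S^1$-factor is $\phi$; topologically $E$ is the product of the mapping torus of $\phi$ with $[0,1]$. Next I would equip $E$ with an exact symplectic form by the same recipe used in Theorem~\ref{thm:stein-monoid}: combine the fiberwise Liouville form $\beta$ (a primitive of $\omega$) with a primitive of an area form on $A$ (which exists because $H^2(A;\R) = 0$), and apply the standard modification near the vertical boundary $\partial\Sigma \times A$ so that the resulting Liouville vector field is transverse to the boundary and points outward. The assumption that $\phi$ equals the identity near $\partial\Sigma$ is precisely what makes these local choices descend to a globally defined exact symplectic form on $E$.

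It then remains to identify the boundary. A direct inspection shows that, after corner-smoothing along $\partial\Sigma \times \partial A$, the boundary $\partial E$ is the mapping torus of $\phi$ (at $t=0$) glued to the mapping torus of $\phi^{-1}$ (at $t=1$, the inverse arising from the reversed induced orientation) via a thickening of $\partial\Sigma \times S^1$; by the description in \S\ref{sec:sym open books} this is the doubled open book of $(\Sigma, \omega, \phi)$ and $(\Sigma, \omega, \phi^{-1})$. The main obstacle will be verifying that the contact structure induced on $\partial E$ by the outward Liouville vector field agrees with the binding sum contact structure on both mapping-torus sides, and that the corner-smoothing can be arranged compatibly with the exact symplectic form; both points should be handled by the same local page-plus-binding model that appears in the proof of Theorem~\ref{thm:stein-monoid}.
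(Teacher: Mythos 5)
Your proposal is correct and follows essentially the same route as the paper: the paper's filling is exactly the $\Sigma$-bundle over an annulus with monodromy $\phi$ (the mapping torus of $\phi$ crossed with an interval in $t$), equipped with $\omega + dz\wedge dt$, made exact by taking $\phi$ to be an exact symplectomorphism and adding the correction term $-g'(z)\varphi(p)\,\partial_t$ to the Liouville field, with the boundary region near $\partial\Sigma$ handled by the cutoff $\{\varrho^2+t^2\le C^2\}$ rather than by explicit corner-smoothing. The only caveat is that global exactness comes from the exactness of $\phi$ and this interpolation trick (not merely from $\phi=\mathrm{id}$ near $\partial\Sigma$, which is what makes the boundary modifications well defined), but since you invoke the same recipe as in Theorem~\ref{thm:stein-monoid}, this is the intended mechanism.
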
 
In dimension $3$ the above statement appeared in \cite{MR3102479}, 
though details of the argument have not been carried out.
As outlined by Wendl in \cite{MR3102479}*{Remark 4.1} this statement has the following consequence 
for \textit{local filling obstructions}, i.e.\ subsets in the likes of overtwisted discs in dimension $3$ that inhibit the existence of a symplectic filling, in arbitrary dimensions.
Similar results in dimension $3$, concerning planar and Giroux torsion, 
are presented in \cite{MR3102479} and \cite{MR2737776}.  
\begin{cor}
\label{cor:filling obstr}
Let $(B,\pi)$ be an open book decomposition of a $(2n + 1)$-dimensional 
contact manifold $(M,\xi)$ and let $\mathcal{O} \subset (M,\xi)$  
be any local filling obstruction, 
then $B$ must intersect $\mathcal{O}$ non-trivially.
\end{cor}
Our final result will be the following. 
Let $(M,\xi)$ be a closed, oriented, $(2n+1)$-dimensional contact manifold supported by an open book with page $(\Sigma,\omega)$ and monodromy $\phi$.
Suppose further that $(\Sigma,\omega)$ symplectically embeds into a second $2n$-dimensional (not  necessarily  closed) symplectic manifold $(\Sigma',\omega')$, i.e.
\[
			(\Sigma,\omega) \subset (\Sigma',\omega').
\]
Let $M'$ be the symplectic fibration over the circle with fibre $(\Sigma',\omega')$ and monodromy equal to 
$\phi$ over $\Sigma \subset \Sigma'$ and equal to the identity elsewhere.
The following theorem has previously been proved by
D\"orner--Geiges--Zehmisch \cite{DGZ}. The proof in the present paper uses slightly different methods (cf.~\S\ref{sec:sympl fibration}).
\begin{thm}
\label{cor:symplectic fibration}
There is a smooth manifold $W$ with $\partial W = (-M) \sqcup M'$ and a symplectic 
form $\Omega$ on $W$ for which $(M,\xi)$ is a concave boundary component, 
and $\Omega$ induces $\omega'$ on the fibers of the fibration $M' \to S^1$. 
\end{thm}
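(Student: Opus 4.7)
The strategy is to build $W$ explicitly as the union of two pieces, reflecting the decompositions
\[
M = \Sigma_\phi \cup N(B),
\qquad
M' = \Sigma_\phi \cup \bigl((\Sigma'\setminus\Int\Sigma)\times S^1\bigr),
\]
which agree along the mapping-torus piece $\Sigma_\phi$ (with common boundary $\partial\Sigma\times S^1$) and differ only over the binding of $M$. The first piece of $W$ will be a product cobordism over $\Sigma_\phi$ equipped with a Thurston-type symplectic form; the second piece will be a symplectic cap replacing the binding neighbourhood $N(B)$ by the complementary piece $(\Sigma'\setminus\Int\Sigma)\times S^1\subset M'$.

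For the product piece, take $(\Sigma_\phi\times[0,1],\Omega_1)$ with $\Omega_1 = d\tilde\beta + ds\wedge d\theta$, where $\tilde\beta$ extends the Liouville primitive of $\omega=d\beta$ to $\Sigma_\phi$ (via a Moser correction handling the monodromy $\phi$), $\theta$ is an $S^1$-coordinate on the fibration $\Sigma_\phi\to S^1$, and $s\in[0,1]$. A direct computation shows $\Omega_1^{n+1}\neq 0$, identifies the Liouville primitive $\tilde\beta+s\,d\theta$, and gives $\omega$ as the induced two-form on each fibre $\Sigma$ of $\Sigma_\phi\to S^1$. For the cap piece, take $V = (\Sigma'\setminus\Int\Sigma)\times D^2$ with the product symplectic form $\omega'|_{\Sigma'\setminus\Sigma} + r\,dr\wedge d\theta$, where $(r,\theta)$ are polar coordinates on $D^2$. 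Its boundary splits as $\partial\Sigma\times D^2 = N(B)$, carrying (up to an overall scaling) the standard binding contact model $\alpha_B + r^2 d\theta$ via the Liouville collar of $\partial\Sigma$, and $(\Sigma'\setminus\Int\Sigma)\times S^1$, the missing piece of $M'$, on whose fibre slices the induced form is $\omega'|_{\Sigma'\setminus\Sigma}$.

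Glue the two pieces along $\partial\Sigma\times S^1\times[0,1]$ (with the corners of $V$ smoothed into a vertical strip in the standard way). Since $\phi=\id$ near $\partial\Sigma$ and the Liouville collar of $\partial\Sigma\subset\Sigma\subset\Sigma'$ is common to both pieces, one can arrange the symplectic forms $\Omega_1$ and $\Omega_V$ to match after a suitable collar identification, producing a global exact symplectic form $\Omega$ on $W$. The Liouville vector field $\partial_s$ of the product piece and the outward radial vector field on the $D^2$-factor of $V$ combine into an outward-pointing Liouville vector field along the bottom, so that $(M,\xi)$ is a concave boundary component. On each fibre $\Sigma'$ of $M'\to S^1$, the form $\Omega$ restricts to $\omega$ on the $\Sigma$-part (from $\Omega_1$) and to $\omega'|_{\Sigma'\setminus\Sigma}$ on the complement (from $\Omega_V$); by the embedding $(\Sigma,\omega)\subset(\Sigma',\omega')$ these agree on the overlap and together give $\omega'$.

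The main technical obstacle is matching the two symplectic forms smoothly across the gluing locus while keeping $M$ concave: one must calibrate the Liouville collar of $\partial\Sigma$ in $\Sigma'$ (used to define $V$) against the radial direction on the $D^2$-factor of the binding cap, so that the resulting Liouville form extends without singularities and its transverse direction along $M$ points outward. This is an adaptation of the page-gluing analysis underlying the generalised $1$-handle construction of \S\ref{sec:1 handle} that supports the proof of Theorem~\ref{thm:stein-monoid}, and it is in this local model that our approach diverges from the one of D\"orner--Geiges--Zehmisch.
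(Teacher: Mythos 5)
Your two-piece decomposition (a Thurston-type product $\Sigma(\phi)\times[0,1]$ glued to the cap $(\Sigma'\setminus\Int\Sigma)\times D^2$) is a legitimate alternative route, closer in spirit to the D\"orner--Geiges--Zehmisch argument than to the paper, which instead works inside the single model $\Sigma'\times\R^2$ with $\Omega=\omega'+dt\wedge dz$, inserts the monodromy by cutting an annular region along $\{z=0\}$ and regluing by $\Phi$, and then writes down one explicit Liouville vector field near the lower boundary. However, your write-up has a genuine gap exactly at the crux of the statement, the concavity of $M$. First, $\partial_s$ is not a Liouville vector field for $\Omega_1=d\tilde\beta+ds\wedge d\theta$: one has $i_{\partial_s}\Omega_1=d\theta$ and hence $\mathcal{L}_{\partial_s}\Omega_1=0\neq\Omega_1$. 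The Liouville field dual to your primitive $\tilde\beta+s\,d\theta$ is of the form $v+\bigl(s+\chi'\,d\varphi(v)\bigr)\partial_s$, where $v$ is the fibrewise $\omega$-dual of $\tilde\beta$ and $\varphi$ comes from $\phi^*\beta-\beta=d\varphi$; at $s=0$ this need not be transverse to the bottom, and the $1$-form induced on $\Sigma(\phi)\times\{0\}$ is $\tilde\beta$, which is \emph{not} a contact form on the mapping torus --- one needs $\tilde\beta+K\,d\theta$ with $K$ large, i.e.\ you must run $s$ over $[K,K+1]$ and dominate the monodromy correction by choosing $K$ large. This interpolation-plus-large-constant step is precisely what the paper does with $g(z)$, $(T\Phi^{-1})(Y)$ and the correction term $-g'(z)\varphi(p)\,\partial_t$, and it is the step your proposal replaces by an incorrect vector field. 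Second, along the binding face $N(B)=\partial\Sigma\times D^2\subset\partial V$ the radial field of the $D^2$-factor is \emph{tangent} to that face, not transverse to it; transversality there, and the binding model $h_1\alpha_B+h_2\,d\theta$, must come from a Liouville field for $\omega'$ on a collar of $\partial\Sigma$ inside $\Sigma'\setminus\Int\Sigma$ pointing into $V$ (it exists because $\omega'$ is exact on that collar, since $\omega$ is exact on $\Sigma$), and it has to be matched with the Liouville field of the product piece across the rounded corner. You flag this only as a ``technical obstacle'', but it is where the actual content of the theorem lies, so as written the concavity of $(M,\xi)$ is not established.

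Two further points. Your claim of a \emph{global exact} symplectic form on $W$ is false in general: the cap contains a copy of $(\Sigma'\setminus\Int\Sigma,\omega')$, which for $n>1$ need not be exact; fortunately the theorem does not require exactness, and (as in the paper) a Liouville field is only needed near the concave end. Also note the sign convention: for a concave boundary the Liouville field points \emph{into} the cobordism, not outward as you state. With these repairs --- shift the primitive by $K\,d\theta$, use the genuine Liouville field with its monodromy correction on the product piece, choose a collar Liouville field for $\omega'$ on the cap, match the two near the corner, and verify that the induced contact form on the bottom is the Giroux form of $(\Sigma,\omega,\phi)$ --- your decomposition can be made to work, but the present argument does not yet prove the concavity claim.
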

For $n=1$ we could, for example, choose $\Sigma'$ to be the closed surface obtained by capping 
off the boundary components of $\Sigma$. Then Theorem~\ref{cor:symplectic fibration} would 
recover one of the main results (Theorem 1.1) in \cite{MR2023279}. The low-dimensional case ($n=1$) 
of Theorem~\ref{cor:symplectic fibration} was, using different methods, already carried out in \cite{MR3128981}.
One may think of Theorem~\ref{cor:symplectic fibration} as an extension of the result in \cite{MR2023279}, or \cite{MR3128981} respectively, to higher dimensions.
%
%
\subsection*{Acknowledgements}
%
The main results of the present paper are part of my thesis~\cite{Klukas-Thesis}. 
I want to thank my advisor Hansj\"org Geiges for introducing me to the world of contact topology. 
Furthermore I thank Chris Wendl for perceptive 
comments on an earlier version of the paper, in particular for suggesting to add 
Corollary~\ref{cor:filling obstr}.
Finally I thank Max D\"orner for bringing Theorem~\ref{cor:symplectic fibration} to my attention and 
for useful comments that helped to improve the exposition.

The author was supported by the DFG (German Research Foundation) as fellow of the 
graduate training program \textit{Global structures in geometry and analysis} 
at the Mathematics Department of the University of Cologne, Germany, and by grant GE 1245/2-1 to H. Geiges.
%
%
\section{Preliminaries}
%
%
\subsection{Symplectic cobordisms}
%
%
Suppose we are given a symplectic $2n$-manifold $(X,\omega)$, oriented by the volume form
$\omega^n$, such that the oriented boundary $\partial X$ decomposes as $\partial
X = (- M_-) \sqcup M_+$, where $-M_-$ stands for $M_-$ with reversed
orientation. Suppose further that in a neighborhood of $\partial X$ there is a
Liouville vector field $Y$ for $\omega$, transverse to the boundary and pointing
outwards along $M_+$, inwards along $M_-$. The $1$-form $\alpha = i_Y\omega$
restricts to $TM_\pm$ as a contact form defining cooriented contact structures
$\xi_\pm$.
\parskip 0pt

We call $(X,\omega)$ a \textbf{(strong) symplectic cobordism}
from $(M_-,\xi_-)$ to $(M_+,\xi_+)$, with \textbf{convex} boundary $M_+$ and
\textbf{concave} boundary $M_-$. In case $(M_-,\xi_-)$ is empty $(X,\omega)$ is called a
\textbf{(strong) symplectic filling} of $(M_+,\xi_+)$.
If the Liouville vector field is defined not only in a neighborhood of $\partial X$ but
everywhere on $X$ we call the cobordism or the filling respectively \textbf{exact}.
\parskip 0pt

A \textbf{Stein manifold} is an affine complex manifold, i.e.~a complex manifold
that admits a proper holomorphic embedding into $\C^N$ for some large integer $N$.
By work of Grauert \cite{MR0098847} a complex manifold $(X,J)$ is Stein if and only if it
admits an exhausting plurisubharmonic function $\rho\co X \to \R$.
Eliashberg and Gromov's symplectic
counterparts of Stein manifolds are \textit{Weinstein manifolds}.
\parskip 0pt

A \textbf{Weinstein manifold} is a quadruple $(X,\omega,Z,\varphi)$, see
\cite{eliGro-convex_symplectic_manifolds},  where $(X,\omega)$ is an exact
symplectic manifold, $Z$ is a complete globally defined Liouville vector field,
and $\varphi\co X \to \R$ is an exhausting (i.e.~proper and bounded below)
Morse function for which $Z$ is gradient-like.
Suppose $(X,\omega)$ is an exact symplectic cobordism with boundary $\partial X
= (- M_-) \sqcup M_+$ and with Liouville vector field $Z$. We call $(X,\omega)$
\textbf{Weinstein cobordism} if there exists a Morse function $\varphi\co X \to
\R$ which is constant on $M_-$ and on $M_+$, has no boundary critical points on $M_-$ and on $M_+$,
and for which $Z$ is gradient-like.
%
%
\subsection{Open books}
%
%
An \textbf{open book decomposition} of an $n$-dimensional manifold $M$ is a pair
$(B,\pi)$, where
  $B$ is a co-dimension $2$ submanifold in $M$, called the
  \textbf{binding} of the open book and
  $\pi\co M\setminus B \to S^1$ is a (smooth, locally trivial)
  fibration such that 
  each fibre $\pi^{-1}(\varphi)$, $\varphi\in S^1$, corresponds to the interior
  of a compact hypersurface $\Sigma_\varphi \subset M$ with
  $\partial\Sigma_\varphi = B$.
  The hypersurfaces $\Sigma_\varphi$, $\varphi \in S^1$, are called the
  \textbf{pages} of the open book.
\parskip 0pt

In some cases we are not interested in the exact position of the binding or the pages of an open book decomposition inside the ambient space. Therefore, given an open book decomposition $(B,\pi)$ of an $n$-manifold $M$, we could ask for the relevant data to remodel the ambient space $M$ and its underlying open books structure $(B,\pi)$, say up to diffemorphism. This leads us to the following notion.
\parskip 0pt

An \textbf{abstract open books} is a pair $(\Sigma,\phi)$, where $\Sigma$ is a compact hypersurface with non-empty boundary $\partial \Sigma$, called the \textbf{page} and $\phi\co\thinspace \Sigma \to \Sigma$ is a diffeomorphism equal to the identity near $\partial \Sigma$, called the \textbf{monodromy} of the open book.
Let $\Sigma(\phi)$ denote the mapping torus of $\phi$, that is, the quotient space obtained from $\Sigma \times [0,1]$ by identifying $(x,1)$ with $(\phi(x),0)$ for each $x \in \Sigma$. Then the pair $(\Sigma,\phi)$ determines a closed manifold $M_{(\Sigma,\phi)}$ defined by 
\begin{equation}
\label{eqn:abstract open book}
			M_{(\Sigma,\phi)} := \Sigma(\phi) \cup_{\id} (\partial \Sigma \times D^2),
\end{equation}
where we identify $\partial \Sigma(\phi) = \partial \Sigma \times S^1$ with $\partial (\partial \Sigma \times D^2)$ using the identity map.
Let $B \subset M_{(\Sigma,\phi)}$ denote the embedded submanifold $\partial \Sigma \times \{0\}$. Then we can define a fibration $\pi\co M_{(\Sigma,\phi)}\setminus B \to S^1$ by
\[
	\left.
	 \begin{array}{l}
				\lbrack x,\varphi \rbrack  \\
				 \lbrack \theta, r\e^{i\pi\varphi} \rbrack  
		\end{array} \right\}
							\mapsto [\varphi],
\]
where we understand $M_{(\Sigma,\phi)}\setminus B$ as decomposed as in Equation~\ref{eqn:abstract open
book} and $[x,\varphi] \in \Sigma(\phi)$ or $ [\theta, r\e^{i\pi\varphi}] \in
\partial\Sigma \times D^2 \subset \partial\Sigma \times \C$ respectively. Clearly $(B,\pi)$ defines an open book decomposition of $M_{(\Sigma,\phi)}$.
\parskip 0pt

On the other hand, an open book decomposition $(B,\pi)$ of some $n$-manifold
$M$ defines an abstract open book as follows: identify a neighborhood of $B$
with $B \times D^2$ such that $B = B\times\{0\}$ and such that the fibration on
this neighborhood is given by the angular coordinate, $\varphi$ say, on the
$D^2$-factor. We can define a $1$-form $\alpha$ on the complement $M \setminus (B \times
D^2)$ by pulling back $d\varphi$ under the fibration $\pi$, where this time we
understand $\varphi$ as the coordinate on the target space of $\pi$.
The vector field $\partial \varphi$ on $\partial\big(M \setminus (B \times D^2)
\big)$ extends to a nowhere vanishing vector field $X$ which we normalize by
demanding it to satisfy $\alpha(X)=1$. Let $\phi$ denote the time-$1$ map of the
flow of $X$. Then the pair $(\Sigma,\phi)$, with $\Sigma = \overline{\pi^{-1}(0)}$, defines an abstract open book such that$M_{(\Sigma,\phi)}$ is diffeomorphic to $M$.
%
%
\subsection{Compatibility}
\label{sec:compatibility}
%
%
Let $\Sigma$ denote a compact, $2n$-dimensional manifold admitting an exact symplectic form
$\omega = d\beta$ and let $Y$ denote the Liouville vector field defined by
$\iota_Y \omega = \beta$. Suppose that $Y$ is transverse to the boundary $\partial \Sigma$,
pointing outwards.
Given such a triple $(\Sigma,\omega,\phi)$ a construction of Giroux 
\cite{MR1957051}, cf.\ also \cite{MR2397738}*{\S 7.3}, 
produces a contact manifold $M_{(\Sigma,\omega,\phi)}$ whose contact structure is 
adapted to the open book in the following sense.

A positive contact structure $\xi = \ker \alpha$ and an open book decomposition $(B,\pi)$ of an $(2n + 1)$-dimensional manifold $M$ are said to be \textbf{compatible}, if
the $2$-form $d\alpha$ induces a symplectic form on the interior $\pi^{-1}(\varphi)$ of each page, defining
its positive orientation, and the $1$-form $\alpha$ induces a positive contact form on $B$.
%
%
\section{Concatenation of open books and symplectic fibrations}
\label{sec:concatenation}
\label{sec:sympl fibration}
%
\newcommand{\heightOfSigma}{\boldsymbol{r}}
The following definitions will turn up in the proofs of all our main results: 
let $\Sigma$ denote a compact, $2n$-dimensional manifold admitting an exact symplectic form
$\omega = d\beta$ and let $Y$ denote the Liouville vector field defined by
$\iota_Y \omega = \beta$. Suppose that $Y$ is transverse to the boundary $\partial \Sigma$,
pointing outwards. Denote by $(r,x)$ coordinates on a collar neighborhood 
$(-\varepsilon,0] \times \partial \Sigma$ induced by the negative 
flow corresponding to the Liouville vector field $Y$.
Let $(\hat{\Sigma},\hat\omega = d\hat{\beta})$ denote the completion of $(\Sigma, \omega)$, 
obtained by attaching the positive half 
$\big([0,\infty) \times \partial\Sigma, d(\text{e}^r\beta_0) \big)$ of the 
symplectization of $(\partial\Sigma, \beta_0 = \beta|_{\partial\Sigma})$.
The Liouville vector field $Y$ extends over $\hat\Sigma$ by $\partial_r$ and we will continue 
to denote the extended vector field by $Y$.
Let
$
	\heightOfSigma \co \hat\Sigma \to \mathbb{R}_{\geq 0}
$
be a smooth function on $\hat\Sigma$, 
satisfying the following properties:
\begin{list}{$\bullet$}{}
  \item $\heightOfSigma \equiv 0$ over $\hat\Sigma \setminus \big( (-\varepsilon , \infty ) \times \partial\Sigma \big)$,
  \item $\frac{\partial\heightOfSigma}{\partial_r} > 0$ and $\frac{\partial\heightOfSigma}{\partial_x} \equiv 0$ over $\big( (-\varepsilon , \infty ) \times \partial\Sigma \big)$ with coordinates $(r,x)$, and
	 \item $\heightOfSigma(r,x) = r + 1$ over $\big( [0 , \infty ) \times \partial\Sigma \big)$.
\end{list}
Note that over the collar neighborhood $(-\varepsilon,\infty) \times \partial \Sigma$ the vector field $Y$ is 
gradient-like for $\heightOfSigma$.

In order to define the desired Liouville vector fields in our proofs we will need the following ingredient.
For some sufficiently small $\delta >0$ let
$g \colon\thinspace [0,\delta] \to \R$ be a functions satisfying the
following properties:
\begin{list}{$\bullet$}{}
  	\item $g(y) = 1$, for $y$ near $0$,
  	\item $g(y) = 0$, for $y$ near $\delta$,
  	\item $g'(y) \leq 0$, for each $y \in [0,\delta]$.
\end{list}
We are now ready to construct the desired exact symplectic cobordisms of Theorem~\ref{thm:stein-monoid} as well as of 
Theorem~\ref{cor:symplectic fibration}.
%
%
\begin{proof}[Proof of Theorem \ref{thm:stein-monoid}]
%
%
The starting point for the desired cobordism will be 
the space $\hat\Sigma \times \R^2$ with coordinates $(p, x,y)$. This space is symplectic with symplectic form
\[
	 \Omega =   \hat\omega + dx\wedge dy .
\]
Let $P = P_{a,b,c}$ denote the subset of $\hat\Sigma \times \R^2$ defined by
\[
 			P := \big\{(p,x,y)\co  \heightOfSigma \leq 0, \  x^2 + y^2 \leq c^2 \  \mbox{and} \ (x
 			\pm b)^2 + y^2 \geq  a^2
 			\big\},
\]
where $a,b,c \in \R$ are some potentially very large constants satisfying $a<b<b+a<c$.
The final choice of these constants will later ensure that our 
desired (and yet to be defined) Liouville vector field $Z$ will be transverse to the boundary of the cobordism.
Consider the vector field $Z'=Z'_{b}$ on $\hat\Sigma \times \R^2$ defined by
\begin{equation}
\label{eqn:Z prime}
	Z' = Y + X, 
\end{equation}
where $X = \big(1 - f'(x)\big)y\thinspace\partial_y +
f(x)\thinspace\partial_x$ and  $f:\R \to \R$ is a function satisfying the
following properties:
\begin{list}{$\bullet$}{}
 	\item $f(\pm b) = f(0)=0$,
 	\item $f'$ has exactly two zeros $\pm x_0$ with $0 < |x_0| < b$,
 	\item $|f'(x)| < 1$ for each  $x\in \R$, and 
 	\item $\lim_{{x \to \pm \infty}}f(x) =  \pm \infty.$ 
\end{list}
An easy computation shows that $X$ is a Liouville vector field on
$(\R^2,dx\wedge dy)$ for any function $f$. Hence $Z'$ defines a Liouville
vector field on $\big(\hat\Sigma \times \R^2,\Omega \big)$. Note that $Z'$ is transverse to the boundary of $P$, cf. Figure~\ref{fig:vf X}.
\begin{figure}[h] 
\center
\includegraphics{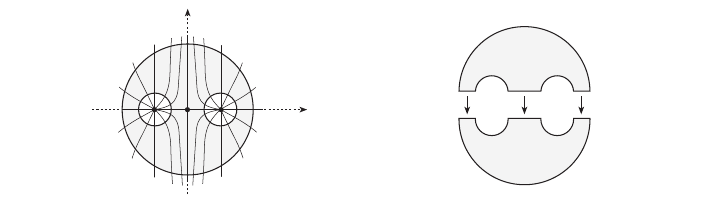}
\put (-193,38){$x$}
\put (-248,88){$y$}
\put (-93,72){$P_+$}
\put (-93,22){$P_-$}
\put (-132,46){$\phi_0$}
\put (-56,46){$\phi_1^{-1}$}
\caption{Left: Flow lines of the Liouville vector field $X$. Right: Construction of $P(\phi_0,\phi_1)$}
\label{fig:vf X}
\end{figure}
\parskip 0pt

We are now ready to define the desired exact symplectic cobordism $(W, \Omega ,Z)$.
Start by cutting $P$ along $\{y=0\}$ and re-glue
with respect to $\phi_0$ and $\phi_1$ as follows. Set
$P_\pm := P \cap \{ \pm y \geq 0 \}$ and $P_0 = P \cap \{y=0\}$. 
Obviously $P_0$ can be understood as part of the boundary of $P_+$ as well as of $P_-$.
Now consider
\[
			P(\phi_0,\phi_1) :=  ( P_+\sqcup P_- ) /_{\sim_\Phi},
\]
where we identify with respect to the map $\Phi\co P_0  \to  P_0$ given by 
\[
	\Phi( p, x,0) :=
		\begin{cases}
  				( \phi_0(p), x,0 ) & \text{, for $x < - b$,}\\
					( 				p, x,0 ) & \text{, for $|x| < b$,} \\ 
  				( \phi_1^{-1}(p), x,0 ) & \text{, for $b < x$.} 
  				
		\end{cases}
\]
Here we understand the domain of definition of $\Phi$ as part of the boundary of $P_+$ 
and the target space as part of $P_-$, cf. also Figure~\ref{fig:vf X}.
Note that, since $\phi_0$ and $\phi_1$ are
symplectomorphisms of $(\Sigma,\omega)$, and can be assumed to equal the identity over $(-\varepsilon,0]\times \partial\Sigma$, they extend trivially over $\hat\Sigma$. Furthermore $\Phi$ keeps the $x$-coordinates
fixed, and hence $\Omega$ descends to a symplectic
form on $P(\phi_0,\phi_1)$ which we will continue to denote by $\Omega$.
We are now going to define the Liouville vector field $Z$ on $P(\phi_0,\phi_1)$.
Without any loss of generality 
the symplectomorphisms $\phi_0$ and $\phi_1^{-1}$ can be chosen to be exact (cf.~\cite{MR2397738}), 
i.e.~we have $\phi_0^*\hat\beta - \hat\beta = d\varphi_0$ and $(\phi_1^{-1})^*\hat\beta - \hat\beta = d\varphi_1$ 
defining functions $\varphi_0$ and $\varphi_1$ on $\Sigma$, unique up to
adding a constant. Hence we may assume that $\varphi_0$ and $\varphi_1$ vanish over $(-\varepsilon,\infty) \times \partial \Sigma$.
To avoid confusing
indices we will write
\[
			\Phi^*\hat\beta - \hat\beta = d\varphi
\]
to summarize these facts. Let $g \colon\thinspace [0, \varepsilon] \to \R$ be the function 
as defined at the beginning of the present section. 
Over $P_+$ we define $Z = Z_b$ to be given as
\begin{equation}
\label{eqn:Z}
			Z = \Big( g(y)\thinspace (T\Phi^{-1})(Y) +  \big(1-g(y) \big) \thinspace Y \Big)+
			 			X + g'(y)\varphi(p)\thinspace\partial_x.
\end{equation}
To show that $Z$ is indeed a Liouville vector field we have to take a look
at the Lie derivative of $\Omega$ along $Z$. With the help of the Cartan
formula we compute
\begin{eqnarray*}
			\mathcal{L}_{Z}\Omega 
		&=& 
			d \big( g\thinspace \Phi^*\hat\beta + (1-g) \thinspace \hat\beta \big) 
						+ dx \wedge dy 
						+ d(g'\varphi\thinspace dy) \\
		&=&  
			\big( dg\wedge(\Phi^*\hat\beta) - dg \wedge \hat\beta +
			g\thinspace(\Phi^*\hat\omega) + (1-g)\thinspace\hat\omega \big)  + dx \wedge dy  +
			g'\thinspace d\varphi\wedge dy  \\
		&=&
			\big( g'\thinspace dy\wedge(\Phi^*\hat\beta) - g' \thinspace dy \wedge \hat\beta +
			g\thinspace \hat\omega + (1-g)\thinspace\hat\omega \big)  + dx \wedge dy  +
			g'\thinspace d\varphi\wedge dy  \\
		&=& 
			\big( g'\thinspace dy\wedge d\varphi+
			\hat\omega \big)  + dx \wedge dy  -
			g'\thinspace dy \wedge d\varphi  \\
		&=& 
			\hat\omega  + dx \wedge dy \\ &=& \Omega.
\end{eqnarray*}
Observe that, since $T\Phi(Z|_{P_0}) = Z'|_{P_0}$, we can extend $Z$ over $P_-$ by
$Z'$. In particular $Z$ descends to a vector field on $P(\phi_0,\phi_1)$.
Let $W' = W'_{a,b,c}$ denote the subset of $\hat\Sigma \times \R^2$ defined by
\[
			W' := \big\{ (p,x,y)\ | \ 
				\heightOfSigma^2 + (x \pm b)^2 + y^2 \geq a^2
				\ \ \mbox{and} \ \ 
				\heightOfSigma^2 + x^2  + y^2  \leq c^2 
				\big\}
\]
and note that we have $P \subset W'$. Finally we define the symplectic
cobordism $W = W_{a,b,c}$ by
\[
			W := ( W'\setminus P ) \cup P(\phi_0,\phi_1).
\]
The boundary of $W$ decomposes as $\partial W = \partial_-W
\sqcup \partial_+W$, where we have
\begin{equation}
\label{eqn:boundary of W}
			\partial_-W =  \heightOfSigma^2 + (x \pm b)^2 + y^2 = a^2 \} \ \ \mbox{and} \ \
			 \partial_+W = \{ \heightOfSigma^2 + x^2  + y^2  = c^2  \} .
\end{equation}
We do not have to worry about the well-definedness of the function 
$\boldsymbol{r}$ on $P(\phi_0,\phi_1) \subset W$ since $\phi_0$ and $\phi_1$ can be assumed to equal the
identity over $(-\varepsilon, \infty) \times \partial\Sigma$, which is the only region where $\heightOfSigma$ is
non-trivial.

Observe that yet we cannot fully ensure that 
the Liouville vector field $Z$ is transverse to $\partial W$ pointing inwards
along $\partial_-W$ and outwards along $\partial_+W$. However the only problem is the 
last term in $Z$, namely the term $g'(y)\varphi(p)\thinspace\partial_x$. 
We tame this deviation as follows:
up to this point we have not fixed the constants $a,b,c$ yet. By choosing $a,b,c$ sufficiently
large the deviation induced by $g'(y)\varphi(p)\thinspace\partial_x$ becomes non-essential and 
the Liouville vector field $Z$ becomes transverse to $\partial W$ pointing inwards
along $\partial_-W$ and outwards along $\partial_+W$.

It remains to show that 
we indeed have $\partial_-W = M_{(\Sigma,\omega,\phi_0)} \sqcup
M_{(\Sigma,\omega,\phi_1)}$ and $\partial_+W = M_{(\Sigma,\omega,\phi_0 \circ
\phi_1)}$. 
We start with the negative boundary components of $W$. Denote by $(\partial_-W)_{\pm} \subset  \partial_-W $ the two distinct components of the negative boundary of $W$ (cf. Equation~\ref{eqn:boundary of W}).
Set 
\[
		B_\pm := \{  x = \pm  b, y = 0 \} \subset (\partial_-W)_{\pm}
\] 
and note that $B_\pm$ has trivial normal bundle. Further note that 
the complement $(\partial_-W)_{\pm}\setminus B_\pm$ admits a fibration over the circle defined by  
\[
	\pi_\pm (p,x,y) :=  \frac{(x \mp b,y)}{ \| (x\mp b,y) \| } \in S^1.
\]
This definition is compatible with the gluing induced by $\Phi$ and 
defines an open book decomposition of $(\partial_-W)_{\pm}$ with pages diffeomorphic to $\Sigma$ and 
monodromy given by $\phi_0$ and $\phi_1$ respectively. In the definition of $X$ (cf. Equation~\ref{eqn:Z prime}) we can choose
the underlying function $f$ such that in a neighborhood of $\pm b$ it is given by $f(x) = \frac{1}{2}x$. 
Therefore in a neighborhood of the binding $B_\pm \subset (\partial_-W)_{\pm}$ the $1$-form $\iota_Y \Omega$ is given by
\begin{equation}
\label{eqn:induced form is standard}
			\iota_Z \Omega = \beta + \tfrac{1}{2}\big(x \thinspace dy - y \thinspace dx\big).
\end{equation}
In addition, pulling back $d (\iota_Z \Omega) = \Omega$ to a fiber $(\pi_\pm)^{-1}(\theta)$, $\theta \in S^1$, yields the given symplectic form $\omega$.
The projection on the $p$-coordinate yields a symplectomorphism of each page $(\pi_\pm)^{-1}(\theta)$ endowed with the symplectic form induced by $\Omega$ to the subset $\Sigma_{\heightOfSigma < a} \subset (\hat\Sigma, \hat\omega)$.
Note that, since we are just interested in the induced contact structure on $\partial_-W$ (not the whole cobordism $W$), we are allowed to set $a=1$.
This shows that the contact structure on $(\partial_-W)_{\pm}$ induced by $\iota_Z \Omega$ is the same as given by the generalized Thurston-Winkelnkemper construction (cf.~\cite{MaxThesis}).

The argument for $\partial_+W$ is almost similar -- except for the fact that in a neighborhood of the binding the vector field $X$ (cf. Equation~\ref{eqn:Z prime}), or rather the underlying function $f$, is not of the right form. However since we are just interested in the induced contact structure on $\partial_+W$ (not the whole cobordism $W$) we are allowed to change $f$ accordingly: choose an isotopy $(f_t)_{t\in[0,1]}$ with $f_0\equiv f$, $f_1(x)=\frac{1}{2}x$ and such that for each $f_t$ the induced vector field $Z$ (cf. Equation~\ref{eqn:Z}) stays transverse to $\partial_+W$. The we obtain a contact structure for each $t \in [0,1]$ on $\partial_+W$ all of which are contactomorphic by Gray stability. Set
\[
		B := \{  x = y = 0 \} \subset \partial_+W
\] 
and note that $B$ has trivial normal bundle. Further note that 
the complement $( \partial_+W )\setminus B$ admits a fibration over the circle defined by  
\[
	\pi(p,x,y) :=  \frac{(x,y)}{ \| (x,y) \| } \in S^1.
\]
Following the same line of arguments as for $\partial_-W$ one concludes that the contact structure on $\partial_+W$ induced by $\iota_Z \Omega$ is the same as given by the generalized Thurston-Winkelnkemper construction (cf.~\cite{MaxThesis}).
\begin{figure}[h] 
\center
\includegraphics{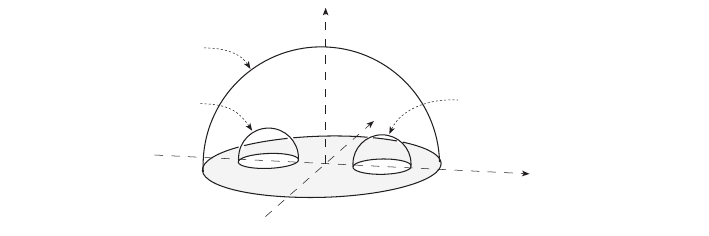}
\put(-180,100){$\heightOfSigma$}
\put(-95,18){$x$}
\put(-171,55){$y$}
\put(-117,61){$M_{(\Sigma,\omega,\phi_1)}$}
\put(-287,61){$M_{(\Sigma,\omega,\phi_0)}$}
\put(-300,92){$M_{(\Sigma,\omega,\phi_0 \circ \phi_1)}$}
\caption{Schematic picture of the symplectic cobordism constructed in Theorem
\ref{thm:stein-monoid}.}
\label{fig:the cobordism}
\end{figure}
\end{proof}
As mentioned above we will now briefly sketch an alternative approach to Theorem~\ref{thm:stein-monoid} utilizing a 
\textit{generalized symplectic $1$-handle} as defined in \S\ref{sec:1 handle}.
A similar approach was independently followed by Avdek in \cite{2012arXiv1204.3145}.
%
%
\begin{proof}[Sketch of the alternative approach]
%
%
Suppose we are given two $(2n +1 )$-dimensional contact manifolds $(M_0,\xi_0)$ and $(M_1,\xi_1)$.
Suppose further that they are associated with compatible open books $(\Sigma, \omega, \phi_0)$ 
and $(\Sigma, \omega, \phi_1)$ with equal pages.
For $i=0,1$ let $\pi_i\colon\thinspace M_i \setminus B_i \to S^1$ denote the induced fibrations.

Note that the subsets $ \pi_i^{-1}\big((-\varepsilon,\varepsilon)\big) \setminus (B_i \times D^2_\varepsilon) \subset (M_i,\xi_i)$, $i=0,1$, 
define an embedding
\[
	 S^0 \times N_\varepsilon(\Sigma) \hookrightarrow M_0 \sqcup M_1,
\]
where $N_\varepsilon(\Sigma)$ denotes a neighborhood of $\Sigma$ as described in \S\ref{sec:1 handle}.
We can understand this as the attaching region $\mathcal{N}$ of a 
generalized $1$-handle $H_\Sigma$ as described in \S\ref{sec:1 handle}.
Attaching $H_\Sigma$ to the positive end of a symplectization of $M_0 \sqcup M_1$ one can show that 
we end up with a cobordism whose positive end equals the contact manifold associated to
 $(\Sigma, \omega, \phi_0 \circ \phi_1)$, cf.\ Figure~\ref{fig:concatenation}.
In the proof of Theorem~\ref{thm:stein-monoid} the subset $W_{|x|<b} \subset W$ can be understood 
as a perturbed instance of a handle $H_\Sigma$. 
\begin{figure}[h] 
\center
\includegraphics{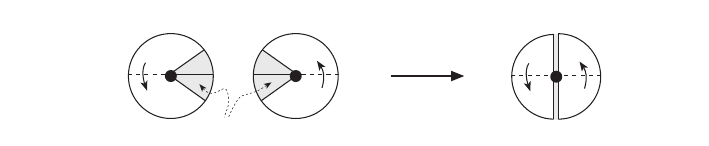}
\put (-190,56){$(\Sigma, \phi_0)$} 
\put (-295,56){$(\Sigma, \phi_1)$}
\put (-292,32){\small{$\phi_0$}}
\put (-174,32){\small{$\phi_1$}}
\put (-55,56){$(\Sigma, \phi_0 \circ \phi_1)$}
\put (-106,32){\small{$\phi_0$}}
\put (-49,32){\small{$\phi_1$}}
\put (-250,3){$S^0 \times N(\Sigma)$}
\put (-142,40){$\#_{H_\Sigma}$}
\caption{Schematic picture of summing two open books along thickened
pages.}
\label{fig:concatenation}
\end{figure}
\end{proof} 
Note that, just as in the previous case, it is possible to 
obtain the cobordism in Theorem~\ref{cor:symplectic fibration} 
by the attachment of
a \textit{generalized symplectic $1$-handle} (cf.\ \S\ref{sec:1 handle})
to $\Sigma' \times D^2$ with symplectic form $\omega' + dx \wedge dy$ 
and to the symplectization of $(M,\xi)$. However in the present 
paper we provide a direct construction.
%
%
\begin{proof}[Proof of Theorem~\ref{cor:symplectic fibration}]
%
%
Let $\heightOfSigma|_\Sigma$ be the restriction of a $C^\infty$-function on 
$\Sigma$ as defined at the beginning of \S\ref{sec:concatenation}.
Let 
\[
    \boldsymbol r \co\Sigma' \to \mathbb{R}_{\geq 0}
\]
denote a smooth extension of this function to the rest of $\Sigma'$ such that on $\Sigma' \setminus \Sigma$ 
we have $\heightOfSigma > 1$.
Note that the Liouville vector field $Y$ on $\Sigma \subset \Sigma'$ associated to the primitive $\beta$ of $\omega$ can be slightly extended
into $\Sigma'\setminus \Sigma$.
In analogy to the proof of Theorem \ref{thm:stein-monoid} we 
consider the symplectic space $\Sigma' \times \R^2$ with symplectic 
form
\[
	\Omega = \omega' + dx \wedge dy.
\]
Over $\Sigma \times \R^2 \subset \Sigma' \times \R^2$ we define the Liouville 
vector field 
\[
	Z' = Y +  \tfrac{1}{2}\big(x\thinspace\partial_x + y\thinspace\partial_y \big).
\]
The final cobordism will depend on some potentially very large constants $a,b \in \mathbb{R}$ satisfying
$0 < a < b$.
These constants will be fixed later to ensure the Liouville vector field is transverse to the boundary.
Let $A = A_{a,b}$ denote the subset of $\Sigma' \times \R^2$ defined by
\[
			A := \big\{ (p,x,y) \co \heightOfSigma \leq 0  
				\  \mbox{and}  \ 
				  a^2 \leq x^2 + y^2 \leq b^2 \big\}.
\]
In analogy of the definition of $P(\phi_0,\phi_1)$ in the proof of 
Theorem~\ref{thm:stein-monoid} we define $A(\phi)$ as follows.
Set $A_\pm := A \cap \{ \pm y \geq 0 \}$ and $A_0 = A \cap \{ y=0 \}$. 
We can understand $A_0$ as part of the boundary of $A_+$ as well as of $A_-$. 
We define
\[
          A(\phi) := (A_+ \sqcup A_-) /_\sim, 
\]
where we identify with respect to the map $\Phi\co A_0 \to A_0$ given by
\[
	\Phi( p, x,0) :=
		\begin{cases}
  				( \phi(p); x,0 ) & \text{, for $x < 0$,}\\
  				( 				p; x,0 ) & \text{, for $x > 0$.} 
		\end{cases}
\]
Let $W' = W'_{a,b}$ denote the subset of $\Sigma \times \R^2$ defined by
\[
			W' := \big\{ (p,x,y) \co \heightOfSigma^2 + x^2 + y^2 \geq {a^2} 
				\ \ \mbox{and} \ \ 
				x^2 + y^2 \leq b^2 \big\}
\]
and note that we have $A \subset W'$. Finally we define the symplectic
cobordism $W$ by
\[
			W := ( W'\setminus A ) \cup A(\phi).
\]
Observe that $\Omega$ descends to a symplectic form on $W$. Furthermore we
indeed have $\partial W = (-M) \cup M'$.

It remains to define the desired Liouville vector field. This will be done in analogy to 
the construction in the proof of Theorem~\ref{thm:stein-monoid}.
The symplectomorphisms $\phi$ can be chosen to be exact (cf.~\cite{MR2397738}), 
i.e.~we have $\phi^*\beta - \beta = d\varphi$ defining a function $\varphi$ on $\Sigma$, unique up to
adding a constant. Hence we may assume that $\varphi$ vanish over a neighborhood of $\partial \Sigma$.
Let $g\co [0,\varepsilon] \to \R$ be a function as defined at the beginning of \S\ref{sec:concatenation}. 
Choose a sufficiently small $\delta>0$ such that $Y$ is still defined over $\Sigma'_{\boldsymbol r < 1+\delta}$.
We define the Liouville vector 
field $Z$ on $W_{\heightOfSigma \leq 1 + \delta}$ by
\[
			Z = \Big( g(y)\thinspace (T\Phi^{-1})(Y) +  (1-g(y)) \thinspace Y \Big)
			+ \tfrac{1}{2}\big(x\thinspace\partial_x + y\thinspace\partial_y \big)
			 + g'(y)\varphi(p)\thinspace\partial_x.
\]
As in the proof of Theorem~\ref{thm:stein-monoid} for sufficiently large 
constants $a,b$ the Liouville vector field $Z$ is 
transverse to the lower boundary $\partial_-W = M_{(\Sigma,\omega,\phi)}$ 
pointing inwards. Finally observe that $\Omega$ induces $\omega'$ on the 
fibers of the fibration $M' \to S^1$, which completes the proof.
\end{proof}
\begin{figure}[h] 
\center
\includegraphics{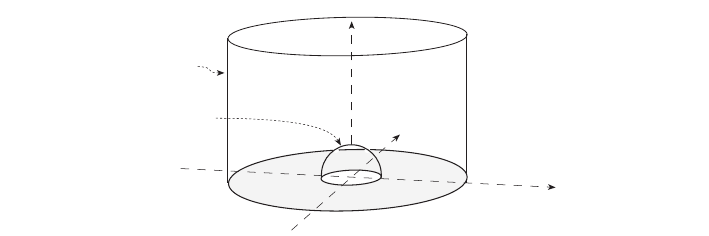}
\put(-182,98){$\heightOfSigma$}
\put(-87,13){$x$}
\put(-147,43){$y$}
\put(-255,55){$M$}
\put(-260,80){$M'$}
\caption{Schematic picture of the symplectic cobordism constructed in Corollary
\ref{cor:symplectic fibration}.}
\label{fig:symplectic fibration}
\end{figure}
%
%
\section{Exact fillings of doubled open books}
\label{sec:sym open books}
%
%
In the present section we show the strong fillability of contact manifolds obtained by a 
certain doubling construction. In short, we perform the binding sum of 
two open books with equal pages and inverse 
monodromies. To be more precise we do the following:
let $(M_0,\xi_0)$ be a contact $(2n+1)$-manifold supported by an open book 
$(\Sigma,\omega,\phi)$ and let $(M_1,\xi_1)$ be the contact manifold associated to 
the open book $(\Sigma,\omega,\phi^{-1})$. Denoting by $B$ the boundary of $\Sigma$ 
we can form a new contact manifold $(M',\xi')$ as follows: 
the binding $B$ defines a codimension-$2$ contact submanifold $B_i \subset (M_i,\xi_i)$ for $i=0,1$.
Their normal bundles $\nu B_0$ and $\nu B_1$ admit trivializations induced by the pages 
of the respective open book decompositions of $M_0$ and $M_1$. Hence, 
we can perform the fiber connected sum, cf.~\cite{MR2397738}*{\S 7.4}, along each copy 
of $B$ with respect to these trivializations of the normal bundles and denote the 
result by $(M',\xi')$, i.e.\ denoting by $\Psi$ the fibre orientation reversing 
diffeomorphism of $B \times D^2 \subset B \times \C$ sending $(b,z)$ to $(b,\bar z)$, 
using the notation in \cite{MR2397738}*{\S 7.4}, we define
\[
		(M',\xi') := (M_0,\xi_0)\#_\Psi (M_1,\xi_1).
\]
The result $(M',\xi')$ defines a fibration over the circle with fibre given by 
\[
			\Sigma' = (-\Sigma) \cup_B \Sigma.
\]
Note that each fibre $\Sigma'$ defines a convex hypersurface, i.e.\
there is a contact vector field $X$ on $(M',\xi')$ which is transverse to the fibers. 
Furthermore for each fibre $\Sigma'$ the contact vector field $X$ is tangent to the contact structure exactly over $B$. 
We will refer to $(M',\xi')$ as a \textbf{doubled open book} and will sometimes denote it by
\[ 
	(\Sigma,\phi)\boxplus (\Sigma,\phi^{-1}).
\]
Before we show the existence of the desired symplectic filling in the above statement,
we show how it can be utilized to prove Proposition~\ref{cor:filling obstr}.
We follow the outline presented in \cite{MR3102479}*{Remark 4.1}. 
\begin{proof}[Proof of Corollary \ref{cor:filling obstr}]
Let $(M,\xi)$ be a contact manifold with a compatible open book decomposition $(B,\pi)$. Choose an 
arbitrarily small neighborhood of the binding $N_B \subset (M,\xi)$ and let  
$(M',\xi')$ denote the doubled open book associated to $(B,\pi)$. Obviously we can understand 
$(M,\xi) \setminus N_B$ as embedded in $(M',\xi')$. Since by Theorem~\ref{thm:balanced fibration} the doubled open book 
$(M',\xi')$ admits an exact filling we conclude that $(M,\xi) \setminus N_B$ cannot contain any local filling obstruction.
\end{proof}
It remains to show the existence of a symplectic filling for any doubled open book. 
%
%
\begin{proof}[Proof of Theorem \ref{thm:balanced fibration}]
%
%
Let $\heightOfSigma \co \hat\Sigma \to [0,\infty)$ be a $C^\infty$-function on the completion $(\hat\Sigma,\hat\omega)$ of $(\Sigma,\omega)$ as defined at the beginning of \S\ref{sec:concatenation}. 
Consider the symplectic space $\hat\Sigma \times \R \times [0, 2\pi]$ with coordinates $(p,x,y)$ and symplectic form $\Omega = \hat\omega + dx\wedge dy$. Set
\[
					\hat A := ( \hat\Sigma \times \R \times [0, 2\pi] ) /_\sim,
\] 
where we identify with respect to the map defined by $\Phi \co (p,x,0) \mapsto (\phi^{-1}(p),x, 2\pi)$. 
Since $\phi$ is a symplectomorphism of $(\Sigma,\omega)$, equal to the identity in a sufficiently small neighborhood of the boundary $\partial \Sigma$, the symplectic form $\Omega$ on $\hat\Sigma \times \R \times [0, 2\pi]$ descends to a symplectic form on $\hat A$ which we continue to denote by $\Omega$.
Note that the fibers of the projection $\hat A \to \R$ on the $x$-coordinate are diffeomorphic to the mapping torus $\Sigma(\phi)$.
\parskip 0pt

Let $W = W_a \subset \hat A$, for some constant $a > 0$, denote the subset defined by 
\[
				W := \{ (p,x,y) \co \heightOfSigma^2 + x^2 \leq a^2 \}.
\]
We follow the line of reasoning in the proof of Theorem \ref{thm:stein-monoid}.
The symplectomorphisms $\phi$ can be chosen to be exact (cf.~\cite{MR2397738}), 
i.e.~we have $\phi^*\beta - \beta = d\varphi$ defining a function $\varphi$ on $\Sigma$, unique up to
adding a constant. Hence we may assume that $\varphi$ vanish over a neighborhood of $\partial \Sigma$.
With $g\co [0, \varepsilon] \to \R$ as defined at the beginning of \S\ref{sec:concatenation} 
we define a Liouville vector field $Z$ on $W$ by
\[
			Z = \big( g(y)\thinspace (T\Phi^{-1})(Y) +  (1 -  g(y)) \thinspace Y \big)
				+ x\thinspace\partial_x
			 + g'(y)\varphi(y)\thinspace\partial_x.
\]
For sufficiently large $a > 0$ this vector field is transverse to the boundary $\partial  W_a$ 
of the subset $W_a$ pointing outwards.
Observe that the fibres of the projection $\partial W_a \to \R$ on the $y$-coordinate are diffeomorphic to $\Sigma' = (-\Sigma) \cup_B \Sigma$.
Moreover we have $\partial W_a = M'$. Finally observe that 
$Z$ indeed induces the contact structure $\xi'$. 
\end{proof}
For the sake of completeness we briefly sketch how the above symplectic cobordism
can be obtained by the attachment of a \textit{generalized symplectic $1$-handle} 
as defined in \S\ref{sec:1 handle}.
A similar approach was independently followed by Avdek in \cite{2012arXiv1204.3145}.
%
%
\begin{proof}[Sketch of the alternative approach]
%
%
Appearing as as convex boundary of $(\Sigma \times D^2, \omega + dx \wedge dy)$ the contact manifold 
$M_{(\Sigma,\omega,\tiny\id)}$ associated to a trivial 
open book $(\Sigma,\omega,\id)$ obviously admits a symplectic filling.
Recall that $M_{(\Sigma,\tiny\id)}$ is given by $(\Sigma \times S^1) \cup_{\id}(\partial \Sigma \times D^2)$.
For any symplectomorphism $\phi$ of $(\Sigma,\omega)$, equal to the identity near
$\partial \Sigma$, the part $(\Sigma \times S^1)$ can be described as
\[
	(\Sigma \times S^1) \cong 	\Big( \Sigma \times [0,1] \sqcup \Sigma \times [2,3] \Big) /_\sim,
\]
where we identify $(x,3)$ with $(\phi(x),0)$ and $(\phi(x),1))$ with $(x,2))$ for all $x \in \Sigma$.
Consider the subsets $\Sigma \times (\tfrac{1}{2} - \varepsilon,\tfrac{1}{2} + \varepsilon )$ and
$\Sigma \times (\tfrac{5}{2} - \varepsilon,\tfrac{5}{2} + \varepsilon )$ of 
$(\Sigma \times S^1) \subset M_{(\Sigma,\tiny\id)}$. They
define an embedding
\[
	S^0 \times  N(\Sigma) \hookrightarrow M_{(\Sigma,\tiny\id)},
\]
where $N(\Sigma)$ denotes a neighborhood of $\Sigma$ as described in \S\ref{sec:1 handle}.
We can understand this as the attaching region $\mathcal{N}$ of a 
generalized $1$-handle $H_\Sigma$ as described in \S\ref{sec:1 handle}.
Attaching $H_\Sigma$ to the positive boundary of a symplectic filling of $M_{(\Sigma,\omega,\tiny\id)}$
we end up with a cobordism whose positive end is easily identified as the contact manifold associated to
the doubled open book $(\Sigma,\phi)\boxplus (\Sigma,\phi^{-1})$, cf.\ Figure~\ref{fig:sym ob}.
\end{proof}
\begin{figure}[h] 
\center
\includegraphics{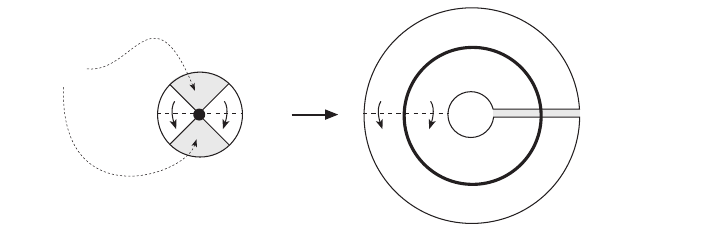}
\put (-238,85){$(\Sigma, \id)$}
\put (-75,100){$(\Sigma,\phi)\boxplus (\Sigma,\phi^{-1})$}
\put (-275,56){\small{$\phi$}}
\put (-222,56){\small{$\phi$}}
\put (-336,72){\colorbox{white}{$S^0 \times N(\Sigma)$} }
\put (-197,65){$\#_{H_\Sigma}$}
\caption{The attachment of a generalized $1$-handle yields a doubled open book.}
\label{fig:sym ob}
\end{figure}
%
%
%
\section{A generalised symplectic $1$-handle}
\label{sec:1 handle}
%
\newcommand{\homeOfHandle}{\hat\Sigma \times \mathbb{R}^2}
We assume that the reader is familiar with the idea behind the symplectic handle constructions 
due to Eliashberg \cite{MR1044658} and Weinstein \cite{MR1114405}. For an introduction we 
point the reader to \cite{MR2397738}*{\S 6.2}. If we choose $(\Sigma, \omega)$ in the following construction to be $D^{2n}$ with its standard symplectic form 
$d\boldsymbol x\wedge d\boldsymbol y$, and radial Liouville vector field $\frac{1}{2}(\boldsymbol x\thinspace\partial_{\boldsymbol x} + \boldsymbol y\thinspace\partial_{\boldsymbol y})$, the $1$-handle construction described below yields an ordinary 
symplectic $1$-handle as described by Eliashberg \cite{MR1044658} and Weinstein \cite{MR1114405}. 

Consider $\homeOfHandle$ with coordinates $(p,z,t)$ and symplectic form 
$\Omega = \omega + dz\wedge dt$, where $\hat\Sigma$ denotes the completion 
of $(\Sigma, \beta)$ defined at the beginning of \S\ref{sec:concatenation}. The vector field
\[
			Z = Y + 2z\thinspace\partial_z - t\thinspace\partial_t  
\] 
defines a Liouville vector field for $\Omega$.
Notice that $Z$ is gradient like for the function on $\homeOfHandle$ defined by
\[
				g(p,z,t):= \heightOfSigma^2 + z^2 - \frac{1}{2}t^2, 
\] 
where $\heightOfSigma \co \hat\Sigma \to [0,\infty)$ is a $C^\infty$-function on $\hat\Sigma$ 
as specified at the beginning of \S\ref{sec:concatenation}. 
In particular the Liouville vector field $Z$ is transverse to the non-degenerate level sets of $g$ and hence induces contact structures on them.
Denote by $N(\Sigma),N_0(\Sigma) \subset \hat\Sigma \times \R$ the subsets defined by 
\[
			N(\Sigma) := N_\delta(\Sigma) := \{ \heightOfSigma < \delta, z < 1 \} \ \ \mbox{ and } \ \	N_0(\Sigma) := \{ \heightOfSigma = z = 0 \}
\]
endowed with the contact structure induced by $\iota_Z\omega$.
Let $\mathcal{N}=\mathcal{N}_\delta$ and $\mathcal{N}_0$ 
denote the set of points $(p,z, t) \subset g^{-1}(-1)$ which lie on a 
flow line of $Z$ through $ N(\Sigma)  \times\{ t = \pm 1 \} $ and $ N_0(\Sigma)  \times \{  t = \pm 1 \}$ respectively -- both viewed as subsets in $\homeOfHandle$. The set $\mathcal{N}$ is going to play the role of the lower boundary.
We now define our \textbf{generalized symplectic $1$-handle} $H_\Sigma$ as the locus of points $(p,z, t) \in \homeOfHandle$ satisfying the inequality
\[
						-1 \leq g(p, z, t) \leq 1
\]
and lying on a flow line of $Z$ through a point of $\mathcal{N}$.
Since the Liouville vector field $Y$ is transverse to the level sets of $g$, the $1$-form 
\[
				\alpha = \iota_Z\Omega = \beta + 2z \thinspace dt + t \thinspace dz
\]
induces a contact structure on the lower and upper boundary of $H_\Sigma$.
\parskip 0pt

It is possible to perturb $H_\Sigma$ without changing the contact structure on the lower 
and upper boundary as follows. Let $\nu\co \homeOfHandle \to \R$ be an 
arbitrary function and let $H^\nu_\Sigma$ denote the image of $H_\Sigma$ under the time-$1$ 
map of the flow corresponding to the vector field $\nu Z$. Sometimes it is more convenient 
to work with such a perturbed handle.
%
%
\subsection{Attachment and of the handle and its result}
\label{sec:1 handle result}
%
Let $(M,\xi = \ker \alpha)$ be a $(2n + 1)$-dimensional contact manifold.
Suppose we are given a strict contact embedding of $\mathcal{N}$, endowed with the contact structure 
induced by $i_Z\Omega$, into $(M,\xi = \ker \alpha)$.
In the following we will describe the symplectic cobordism $W_{(M,\Sigma)}$ associated to the attachment of the handle $H_\Sigma$.
\parskip 0pt

Note that for each point $x \in \mathcal{N}\setminus \mathcal{N}_0$ there is a point $\mu(x) > 0$ in time such that the time-$\mu(x)$ map of the 
flow corresponding to the Liouville vector field $Z$ maps $x$ to the upper boundary of $H_\Sigma$.
This defines a function $\mu\co \mathcal{N}\setminus \mathcal{N}_0 \to \R^+$ which we may, 
with respect to the above embedding of $\mathcal{N}$ in $(M,\xi = \ker \alpha )$, extend to a non-vanishing function over $M \setminus \mathcal{N}_0$. We 
continue to denote this map $M \setminus \mathcal{N}_0 \to \R^+$ by $\mu$.
Consider the symplectization $\big(\R \times M, d(e^r\alpha)\big)$ and let $[0,\mu] \times (M\setminus \mathcal{N}_0)$ denote the subset defined by
\[
				[0,\mu] \times (M\setminus \mathcal{N}_0) := \big\{ (r,x)  \co 0 \leq  r \leq \mu(x) \big\}.
\]
For any point $(0,x) \in \{0\} \times (M\setminus \mathcal{N}_0)$ 
the time-$\mu(x)$ map of the flow corresponding to the Liouville vector field $\partial_r$ on $(\R \times M, d(e^r\alpha))$ maps
$(0,x)$ to $(\mu (x),x)$.
We define $W_{(M,\Sigma)}$ as the quotient space
\[
		W_{(M,\Sigma)} := \Big( \big(	[0,\mu] \times (M\setminus \mathcal{N}_0) \big)  \sqcup H_\Sigma \Big)/_\sim ,
\]
where we identify  $ (r,x) \in [0,\mu] \times (\mathcal{N}\setminus \mathcal{N}_0)$ with
the image $\psi^Z_r(x) \in H_\Sigma$ of $x \in \mathcal{N}\setminus \mathcal{N}_0$ -- understood as sitting in $H_\Sigma$ -- under the time-$r$ map of the flow corresponding to the Liouville vector field $Z$.
This identification does respect the symplectic forms (cf.~\cite{MR2397738}*{Lemma 5.2.4}) and we indeed end up with an 
exact symplectic cobordism $W_{(M,\Sigma)}$. The concave boundary component $\partial_- W_{(M,\Sigma)}$ is equal to $(M,\xi)$ 
whereas the convex component $\partial_+ W_{(M,\Sigma)}$ equals 
\[
			\#_{H_{\Sigma}}(M,\xi) := (M,\xi) \setminus \big( S^0 \times  N(\Sigma) \big) \cup_\partial \big(D^1 \times \partial \overline{N(\Sigma)}, \eta  \big),
\]
with the obvious identifications, and where $\eta$ denotes 
the kernel of the contact form $i_Y\omega + dt$. We will refer to 
$\#_{H_{\Sigma}}(M,\xi)$ as \textbf{generalized connected sum}.

Suppose that $(\Sigma, \omega, Y)$ is Weinstein and let
$\varphi\colon\thinspace \Sigma \to \mathbb{R}$ denote the associated
exhausting Morse function for which $Y$ is gradient-like. Then $\varphi + z^2 - \frac{1}{2}t^2$ is an
exhausting Morse function for which $Z$ is gradient-like showing that 
$W_{(M,\Sigma)}$ is Weinstein as well.

Let us recap the above discussion on the level of contact manifolds and finish with the following
statement.
\begin{prop}
\label{thm:extended sum}
	There is an exact symplectic cobordism from $(M,\xi)$ to $\#_{H_{\Sigma}}(M,\xi)$.
	Furthermore if $(\Sigma,\omega)$ is Weinstein, then so is the cobordism.
	In particular we have the following. If $(M,\xi)$ admits a symplectic 
	filling, then so does $\#_{H_{\Sigma}}(M,\xi)$. \qed
\end{prop}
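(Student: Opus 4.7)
Since the cobordism $W_{(M,\Sigma)}$ has already been assembled in the preceding paragraphs, the plan is to verify in turn that it carries the right exact symplectic structure and boundary, that it is Weinstein whenever $(\Sigma,\omega)$ is, and to deduce the filling consequence by concatenation.

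The first step is to check that the identification defining $W_{(M,\Sigma)}$ respects both the symplectic forms and the Liouville vector fields. The symplectization carries $d(e^r\alpha)$ with Liouville field $\partial_r$, the handle carries $\Omega = \omega + dz\wedge dt$ with Liouville field $Z$, and the identification sends $(r,x)$ with $x\in\mathcal{N}\setminus\mathcal{N}_0$ to the time-$r$ image of $x$ under the flow of $Z$. Here the general fact that the flow of a Liouville vector field is a conformal symplectomorphism (cf.\ \cite{MR2397738}*{Lemma 5.2.4}) makes both the symplectic form and the Liouville vector field descend globally to $W_{(M,\Sigma)}$. The concave boundary is then $\{0\}\times M$ with induced contact form $\alpha$, hence $(M,\xi)$. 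For the convex end, away from the attaching region the boundary is a rescaled copy of $(M,\xi)\setminus(S^0\times\Int N(\Sigma))$, while over the attaching region it is the upper boundary $\{g=1\}\cap H_\Sigma$ of the handle; here $Z$ induces the contact form $i_Y\omega + 2z\,dt + t\,dz$, and a direct inspection identifies this piece diffeomorphically with $D^1\times\partial N(\Sigma)$ carrying, after a routine rescaling in $(t,z)$, the contact form $i_Y\omega + dt$, which is precisely the description of $\#_{H_\Sigma}(M,\xi)$.

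For the Weinstein statement, assume $(\Sigma,\omega,Y,\psi)$ is Weinstein, set up the handle using $\psi$ in place of $\varrho^2$, and consider the function
\[
		\Phi(t,p,z) = \psi(p) + z^2 - \frac{1}{2}t^2,
\]
for which $Z = Y + 2z\partial_z - t\partial_t$ is manifestly gradient-like: its critical points are the critical points of $\psi$ stabilised by an index-$1$ saddle in the $(t,z)$ plane. On the symplectization side the function $r+\psi$ (pulled back along the projection to $\Sigma$) is gradient-like for $\partial_r + Y$, and a cut-off interpolation matches it to $\Phi$ across the attaching collar, since the two Liouville vector fields agree there. Finally, if $(X,\omega_X)$ is a symplectic (respectively Weinstein) filling of $(M,\xi)$, then $X\cup_M W_{(M,\Sigma)}$, glued along a Liouville collar of the convex boundary of $X$, is a symplectic (respectively Weinstein) filling of $\#_{H_\Sigma}(M,\xi)$.

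The step I expect to be the main technical obstacle is the Weinstein interpolation across the attaching region: one must match the Morse data of the handle with that of the symplectization piece without producing spurious critical points, which requires a standard but delicate cut-off argument. The remaining verifications are local computations in the explicit model of the handle.
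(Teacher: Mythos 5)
Your proposal is correct and follows essentially the same route as the paper: Proposition~\ref{thm:extended sum} is stated with its proof absorbed into the preceding construction of $W_{(M,\Sigma)}$ in \S\ref{sec:1 handle result} (gluing along Liouville flow lines via \cite{MR2397738}*{Lemma 5.2.4}, identification of the convex end with $\#_{H_\Sigma}(M,\xi)$), and your verifications simply make those implicit steps explicit. Your treatment of the Weinstein case via the stabilised Morse function $\psi + z^2 - \tfrac12 t^2$ and of the filling statement by concatenation is the standard argument the paper leaves unstated, and it is sound.
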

%
%
%
\begin{bibdiv}   
\pagestyle{myheadings}
\markboth{\textsc{\textsl{BIBLIOGRAPHY}}}{}
	\begin{biblist}
		\bibselect{bib}
	\end{biblist}
\end{bibdiv} 
%
%
\end{document}